\def\version{28/07/2020 \quad version 6
\hfill\href{http://arxiv.org/abs/1406.5620}{arXiv:1406.5620}
}
\def\PO{\text{\pigpenfont R}}
\renewcommand{\thefootnote}{\fnsymbol{footnote}}
\long\def\symbolfootnote[#1]#2{\begingroup%
\def\thefootnote{\fnsymbol{footnote}}\footnote[#1]{#2}\endgroup}
\newtheorem{thm}{Theorem}[section]
\newtheorem{prop}[thm]{Proposition}
\theoremstyle{definition}
\newtheorem{rem}[thm]{Remark}
\numberwithin{equation}{section}
\numberwithin{figure}{section}
\def\:{\colon}
\def\.{\cdot}
\def\<{\left\langle}
\def\>{\right\rangle}
\def\({\left(}
\def\){\right)}
\def\epsilon{\varepsilon}
\def\phi{\varphi}
\def\leq{\leqslant}
\def\geq{\geqslant}
\def\la{\leftarrow}
\def\Lra{\Longrightarrow}
\def\ra{\rightarrow}
\def\hat#1{\widehat{#1}}
\def\tilde#1{\widetilde{#1}}
\def\iso{\cong}
\def\hotimes{\hat{\otimes}}
\def\hsquare{\hat{\square}}
\DeclareMathOperator{\Cont}{Cont}
\DeclareMathOperator{\Id}{Id}
\DeclareMathOperator{\im}{im}
\def\CP{\mathbb{C}\mathrm{P}}
\def\E{\mathrm{E}}
\def\F{\mathbb{F}}
\def\HP{\mathbb{H}\mathrm{P}}
\def\N{\mathbb{N}}
\def\Q{\mathbb{Q}}
\def\Z{\mathbb{Z}}
\DeclareMathOperator{\Ext}{Ext}
\DeclareMathOperator{\Hom}{Hom}
\DeclareMathOperator{\Map}{Map}
\def\Mapc{\Map^{\mathrm{c}}}
\DeclareMathOperator{\Cotor}{Cotor}
\DeclareMathOperator{\Tor}{Tor}
\def\LCotor{\widehat{\Cotor}}
\DeclareMathOperator*{\colim}{colim}
\DeclareMathOperator*{\hocolim}{hocolim}
\DeclareMathOperator{\dlQ}{Q}
\DeclareMathOperator{\tdlQ}{\hat{Q}}
\def\Einfty{$\mathcal{E}_\infty$ }
\DeclareMathOperator{\Prim}{Prim}
\def\Kriz{{K\v{r}\'{i}\v{z}}}
\title
{Power operations in $\boldsymbol{K}$-theory
completed at a prime}
\author{Andrew Baker}
\date{\version}
\address{
School of Mathematics \& Statistics, 
University of Glasgow,
Glasgow G12 8QQ, Scotland.}
\email{a.baker@maths.gla.ac.uk}
\urladdr{http://www.maths.gla.ac.uk/$\sim$ajb}
\thanks{
The mathematics described in this paper is based
in part on work supported by the National Science
Foundation under Grant No.~0932078~000 while the
author was in residence at the Mathematical Sciences
Research Institute in Berkeley California, during
the Spring~2014 semester; the author also
acknowledges the support of the Max Planck Institute
for Mathematics in Bonn during a visit in January~2020. \\
I would like to thank Gerd Laures, Justin Noel
and Charles Rezk for helpful conversations, and
especially Francis Clarke who taught me about
the important r\^ole of $p$-adic analysis in
$K$-theory back in the early~1980s.}
\keywords{$K$-theory, $\E_\infty$ ring spectrum,
commutative $S$-algebra, power operation}
\subjclass[2010]{Primary 55P43;
Secondary 13D03, 55N35, 55P48}
\begin{document}

\begin{abstract}
We describe the action of power operations
on the $p$-completed cooperation algebras
$K^\vee_0 K = K_0(K)\sphat_p$\/ for $K$-theory
at a prime~$p$, and $K^\vee_0 KO = K_0(KO)\sphat_2$.
These results are used to identify the $K(1)$-local
homotopy type of some $\E_\infty$ ring spectra
obtained by killing elements of Hopf invariant~$1$.
\end{abstract}

\maketitle

\tableofcontents

\section*{Introduction}

Power operations in suitably completed (co)homology
theories have been studied and used by several authors,
for example Rezk~\cites{CR:CongCond,CR:ModIsogCmplxes,CR:PowOps-Koszul};
the paper of Barthel and Frankland~\cite{TB&MF} building
on work of McClure~\cite{LNM1176} provides a convenient
account of this, in particular for the case of $p$-complete
$K$-theory. An important source on related mathematics
is the article by Hopkins~\cite{MJH:K(1)localEinfty},
and indeed the volume~\cite{TMF} contains much that
the reader may find helpful.

In the present paper we describe the action of the
$\theta$-operator (which we follow~\cite{TB&MF} in
denoting by~$\dlQ$) on the $p$-completed cooperation
algebra
\[
K^\vee_0 K = K_0(K)\sphat_p
           = \pi_0(L_{K(1)}(K\wedge K)),
\]
where $K=KU$. We expect this to be of use in 
investigating the $\theta$-action and its 
interaction with the $K^\vee_*(K)$-coaction 
on $K^\vee_*(A)$ for any \Einfty{} ring 
spectrum~$A$. We also give some results 
on $K^\vee_0(KO)$ when~$p=2$ and on 
$K^\vee_*(\mathbb{P}X)$, where $\mathbb{P}X$ 
denotes the free commutative $S$-algebra 
on a spectrum~$X$ introduced in~\cite{EKMM}.

It is likely that some of our results are known
to experts, but we have not found a published
source, so we feel it worthwhile writing them
down.

An obvious related problem to investigate is
that of describing the actions of power operations
on $K^\vee_0(BU)$ or equivalently on $K^\vee_0(MU)$
(these actions correspond under the Thom isomorphism).
The \Einfty{} orientation of~\cite{MJ:Glasgow}
induces a morphism of $\theta$-algebras
$K^\vee_0(MU)\to K^\vee_0(K)$ but this is not
injective on the image of the Hopf algebra
primitives $\Pr K^\vee_0(BU)$, and this seems
to make the determination of the action on
primitives more delicate then in the case of
ordinary mod~$p$ homology as carried out by
Kochman~\cite{SOK:DLops}. We may return to
this in future work.

\bigskip
\noindent
\textbf{Conventions and notation:} We will work
with \Einfty{} ring spectra in the setting of
commutative $S$-algebras of~\cite{EKMM} and use
these terms interchangeably. We will assume that
$KU$ and $KO$ have their standard \Einfty{}
ring structures as produced in~\cite{HGamma}
for example.

Throughout, $p$ will be a fixed prime and $K=KU_{(p)}$
will denote the $p$-local $2$-periodic complex
$K$-theory ring spectrum; we will also denote
the $p$-adic completion of~$K$ by $K\sphat_p=KU\sphat_p$.
We will often denote (co)homology without
brackets where appropriate by setting
$K^*X=K^*(X)$ and $K_*X=K_*(X)$ for example,
but include brackets where it improves
readability.

\section{$L$-complete modules}\label{sec:L-complete}

We will be working with $p$-complete $K$-theory
for a prime~$p$, and this takes values in
the category of $L$-complete graded modules
for the local ring $\Z_{(p)}$. The utility
of working with such a category originated
in work of Greenlees \& May \cite{JPG&JPM:I-adic}
and was made explicit by Hovey \& Strickland~\cite{MAMS666}.
The reader is also referred to Barthel \&
Frankland~\cite{TB&MF} for a more recent
account.

A fundamental observations is that for any
spectrum each $p$-completed $K$-theory group
\[
K^\vee_nX = \pi_n(L_{K(1)}(K\wedge X))
\]
is $L$-complete (with respect to~$\Z_{(p)}$),
i.e., $K^\vee_nX\iso L_0K^\vee_nX$ where $L_s$
($s\geq0$) is the left derived functor of
$p$-adic completion on the category
of~$\Z_{(p)}$-modules. In fact $L_s$ is trivial
when $s>1$.

When $M$ is $\Z_{(p)}$-free or flat then
$L_0M = M\sphat_p$\; and $L_1M=0$ by~\cite{L-complete}.
More generally, $L_*M$ can calculated by taking
a free resolution
\[
0\la M \leftarrow F_0 \leftarrow F_1 \la 0
\]
and taking homology of the induced complex
\[
0\la (F_0)\sphat_p \leftarrow (F_1)\sphat_p \la 0.
\]
For $M=K_n(X)$ this allows us to induce up the
effect of a natural transformation $\theta\:K_n(-)\to K_n(-)$.
To see how to do this we need some background.

Recall that a ring spectrum $E$ satisfies the
\emph{Adams condition} of~\cite{JFA:BlueBook}
if it can be written as colimit
$E=\colim_\alpha E_\alpha$ of dualisable
spectra $E_\alpha$. This condition ensures the
existence of suitable resolutions for constructing
Universal Coefficient spectral sequences.

In particular, $KU$ and $KO$ satisfy the Adams
condition, see~\cite{JFA:BlueBook}*{proposition~13.4}.
The proof there uses even suspensions of skeleta
of $BU$ and $BSp$ (with cells in even degrees);
in fact these can be replaced by suspensions
of skeleta of $\CP^\infty$ and $\HP^\infty$
by results of~\cite{AHS}.

Then the $K_*$-module $K_*(X)$ can be resolved
using the following procedure due to Adams,
see~\cite{JFA:BlueBook}*{lemma~13.7}. Take a
set of $K_*$-module generators of
$K_*(X)=\colim_\alpha\pi_*(K_\alpha\wedge X)$
and form their adjoint maps $f\:\Sigma^{n(f)}DE_\alpha\to X$
so that together these induce an epimorphism
\[
\bigoplus_f K_*(DE_\alpha) =
K_*\biggl(\bigvee_f DE_\alpha \biggr)
\xrightarrow{\epsilon}K_*X.
\]
Here each $K_*(DE_\alpha)$ is a finitely
generated free $K_*$-module and by work of
Hovey~\cite{Hovey:L-colimits}*{theorem~3.3},
\[
K^\vee_*\biggl(\bigvee_f DE_\alpha\biggr)
\iso
\biggl(\bigoplus_f K_*(DE_\alpha)\biggr)\sphat_p.
\]
which is pro-free. As $K_*$ is a graded
principal ideal domain, $\ker\epsilon$ is
also a free $K_*$-module, so $L_*K_*(X)$
can be calculated using the complex
\[
0\la K^\vee_*\biggl(\bigvee_f DE_\alpha\biggr)
\leftarrow
(\ker\epsilon)\sphat_p\la 0.
\]
Notice also that the spectral sequence of
\cite{MH:SSMoravaEthy}*{corollary~3.2}
collapses to give a collection of short
exact sequences
\[
0\ra L_0K_n(X) \to K^\vee_*(X) \to L_1K_{n-1}(X)\ra 0.
\]

\section{$K$-theory completed at a prime and
power operations}\label{sec:K-thy&powops}

We first recall some standard facts about the
rings of $p$-local integers~$\Z_{(p)}$ and
$p$-adic integers~$\Z_p$. By definition, if
we give $\Z_{(p)}$ and $\Z_p$ the $p$-adic
norm topologies then $\Z_{(p)}\subseteq\Z_p$
is a dense subring. The residue fields of
$\Z_{(p)}$ and $\Z_p$ both agree with the
finite field $\F_p$ which we give the discrete
topology. There is a pullback square of
topological multiplicative monoids
\[
\xymatrix{
\Z_{(p)}^\times\ar@{^{(}->}[r]\ar@{->>}[d] & \Z_{(p)}\ar@{->>}[d] \\
\F_p^\times\ar@{^{(}->}[r] & \F_p
}
\]
and on $p$-adic completion this becomes
the pullback square
\[
\xymatrix{
\Z_p^\times\ar@{^{(}->}[r]\ar@{->>}[d] & \Z_p\ar@{->>}[d] \\
\F_p^\times\ar@{^{(}->}[r] & \F_p
}
\]
so $\Z_p^\times$ is the completion of
$\Z_{(p)}^\times$ with respect to the
$p$-adic norm.

It is known from~\cites{AHS,JFA&FWC,padic,In-localEn}
that
\[
K_0K \iso
\{f(w)\in\Q[w,w^{-1}] : f(\Z_{(p)}^\times)\subseteq\Z_{(p)}\},
\]
and $K_0K$ is a free $\Z_{(p)}$-module. Since
$\Z_{(p)}^\times$ is a dense subgroup of $\Z_{p}^\times$,
we may interpret Laurent polynomials as continuous
functions on $\Z_{p}^\times$ and obtain
\[
K_0K \iso
\{f(w)\in\Q[w,w^{-1}] : f(\Z_{p}^\times)\subseteq\Z_{p}\}
\subseteq\Cont(\Z_{p}^\times,\Z_{p}),
\]
where the latter is the $p$-adic Banach algebra
of continuous maps $\Z_{p}^\times\to\Z_{p}$
equipped with the operator norm; it is known
that this subring of $\Cont(\Z_{p}^\times,\Z_{p})$
is dense. It follows that
\[
K^\vee_0K = \pi_0((K\wedge K)\sphat_{p}) = (K_0K)\sphat_{p},
\]
where the $p$-adic topology involved in the completion
agrees with $p$-adic norm topology inherited from
$\Cont(\Z_{p}^\times,\Z_{p})$. Therefore there is
an isomorphism of $p$-adic Banach algebras
\begin{equation}\label{eq:KvK-Cont}
K^\vee_0K \iso \Cont(\Z_{p}^\times,\Z_{p}).
\end{equation}
For $a\in\Z_{(p)}^\times$, the stable Adams operation
\[
\psi^a\in K^0K \iso \Hom_{\Z_{(p)}}(K_0K,\Z_{(p)})
\]
is determined by the pairing
$\braket{ - | - }\:K^0K\otimes K_0K\to\Z_{(p)}$,
i.e.,
\[
\braket{\psi^a | f(w)} = f(a).
\]
This extends to a continuous pairing given by
\[
\braket{\psi^a | f} = f(a)
\]
if $a\in\Z_{p}^\times$ and $f\in\Cont(\Z_{p}^\times,\Z_{p})$;
here $\psi^a$ is best viewed as an element of
the pro-group ring
\[
\Z_p[\![\Z_{p}^\times]\!] \iso (K^0K)\sphat_p
\]
For more details on $K_0(K)$ and $\Cont(\Z_{p}^\times,\Z_{p})$,
see~\cite{HGamma}*{section~3}; for a broader overview of the
connections with $p$-adic analysis see~\cite{FWC:UltrametAnal}.

We also recall that $K_0K$ is a bicommutative $\Z_{(p)}$-Hopf
algebra with coproduct $\Psi$ given by
\[
\Psi(f(w)) = f(w\otimes w)
\]
and antipode $\chi$ given by
\[
\chi(f(w)) = f(w^{-1}).
\]
Using the linear pairing $\braket{-|-}$ we can
obtain a left action of~$K^0K$ on $K_0K$; for
$\alpha\in K^0K$, we write $\alpha f(w)$ for
this. In particular, if $a\in\Z_{(p)}^\times$
this coincides with the action of the Adams
operation $\psi^a$,
\[
\psi^a f(w) = f(a^{-1}w).
\]
The reason for the inverse is that we are using
the standard left action of the dual of the
Hopf algebra $K_0K$ defined by
\[
\alpha x = \sum_i\braket{\alpha(\chi(x'_i))|x''_i},
\]
where $\Psi x = \sum_i x'_i\otimes x''_i$,
$\Psi(g(w))=g(w\otimes w)$ and $\chi(g(w))=g(w^{-1})$.

In the $p$-complete setting, (stable) Adams operations
are indexed by the $p$-adic units $\Z_p^\times\subseteq\Z_p$.
It follows that there is a continuous action
\[
\Z_p^\times \times {K_r(X)_p\sphat}\to{K_r(X)_p\sphat}\; ;
\quad
(\alpha,x) \mapsto \psi^\alpha(x).
\]

We use notation from \cite{LNM1176}*{chapter~IX} and
the more recent~\cite{TB&MF}. For an \Einfty{} ring
spectrum $A$ there is a natural power operation
$\dlQ\:K^\vee_0A\to K^\vee_0A$ (sometimes also
called $\theta$) satisfying properties that can be
deduced from those listed in~\cite{LNM1176}*{theorem~IX.3.3}
for the homology theories $K_*(-;p^r)$ with coefficients,
and are discussed in~\cite{TB&MF}*{section~6}, although
the version there is for $\Z/2$-graded $K$-theory.
However, as we are mainly interested in the case of
$K^\vee_*K$ which is concentrated in even degrees,
we work mostly with $K^\vee_0(-)$ but sometimes
need to relate this to $K^\vee_{2n}(-)$ for an
integer~$n$.

The operation $\dlQ$ is neither additive nor
multiplicative, but it satisfies the identities
\begin{subequations}\label{eq:Q-Properties}
\begin{align}
\dlQ(x+y) &= \dlQ x + \dlQ y
   + \frac{1}{p}\biggl(x^p + y^p - (x+y)^p\biggr),
\label{eq:Q-Properties-add} \\
\dlQ(xy) &= y^p\dlQ x + x^p\dlQ y + p\dlQ x\dlQ y,
\label{eq:Q-Properties-mult}
\end{align}
\end{subequations}
or equivalently the operation $\tdlQ$ defined
by
\[
\tdlQ x = p\dlQ x + x^p
\]
is additive and multiplicative,
\begin{align*}
\tdlQ(x+y) &= \tdlQ x + \tdlQ y, \\
\tdlQ(xy) &= \tdlQ x \tdlQ y.
\end{align*}
We also have $\dlQ1=0$, hence $\tdlQ1=1$ and
$\tdlQ$ is a (unital) ring homomorphism.
Finally, if $a\in\Z_{(p)}$ and $u\in\Z_{(p)}^\times$,
\begin{align*}
\dlQ(ax) &= a\dlQ(x) + \frac{(a-a^p)}{p} x^p, \\
\tdlQ(ax) &= a\tdlQ x,  \\
\psi^u\dlQ(x) &= \dlQ(\psi^u x).
\end{align*}
When $K^\vee_r(A)=K_r(A)\sphat_p$, the operations
$\dlQ$ and $\tdlQ$ are continuous with respect to
the $p$-adic topology. This allows us to extend
these identities to the case where
$\alpha\in\Z_p^\times$,
\begin{align*}
\dlQ(\alpha x)
&= \alpha\dlQ(x) + \frac{(\alpha - \alpha ^p)}{p} x^p, \\
\psi^\alpha\dlQ(x) &= \dlQ(\psi^\alpha x), \\
\tdlQ(\alpha x) &= \alpha\tdlQ x, \\
\psi^\alpha\tdlQ(x) &= \tdlQ(\psi^\alpha x).
\end{align*}

Suppose that $X$ is an infinite loop space (and
so $\Sigma^\infty_+X$ is an \Einfty{} ring
spectrum). If $K_0(\Sigma^\infty_+X)$ is
$\Z_{(p)}$-free so that
$K^\vee_0(\Sigma^\infty_+X)=K_0(\Sigma^\infty_+X)\sphat_p$
is pro-free, the diagonal map on~$X$ induces a
coalgebra structure on $K_0(\Sigma^\infty_+X)$
and a topological coalgebra structure on
$K^\vee_0(\Sigma^\infty_+X)$. In that situation,
$\tdlQ$ is a coalgebra morphism; in particular,
$\tdlQ$ preserves coalgebra primitives.

We also mention a useful fact about Adams operations.
Let $\alpha\in\Z_{p}^\times$ and suppose that
$\psi^\alpha x=\alpha^dx$. Since $\psi^\alpha$ is
a ring homomorphism,
\begin{align*}
\psi^\alpha\tdlQ x
&= p\dlQ(\psi^\alpha x) + (\psi^\alpha x)^p \\
&= p\dlQ(\alpha^dx) + (\alpha^d x)^p \\
&= \tdlQ(\alpha^d x),
\end{align*}
giving the identity
\[
\psi^\alpha\tdlQ x = \alpha^d\tdlQ x.
\]

\section{Power operations on $K^\vee_0K$
and on $K^\vee_0KO$ for $p=2$}\label{sec:Powops-KK}

For the case of $K^\vee_0K$ we continue to
assume that~$p$ is an arbitrary prime.

We begin with the action of $\dlQ$ on the
basic element $w\in K_0K\subseteq K^\vee_0K$.
For $a\in\Z_{(p)}^\times$,
\[
\psi^a\dlQ(w) = \dlQ(\psi^a w) = \dlQ(a^{-1}w).
\]
We write $\dlQ(w) = f_0(w)$ where $f_0\in\Cont(\Z_{p}^\times,\Z_{p})$
is the function given by $x\mapsto f_0(x)$,
so we are identifying~$w$ with the inclusion
function $\Z_{p}^\times\to\Z_{p}$ under the
isomorphism~\eqref{eq:KvK-Cont}.

By~\cite{LNM1176}*{theorem~IX.3.3(vi)}, for $k\in\Z$,
\[
\dlQ(kw) = k\dlQ(w) + \frac{(k-k^p)}{p}w^p,
\]
so as $\Z_{(p)}^\times\subseteq\Z_{p}^\times$
is dense, this defines a continuous function
\[
\Z_{p}^\times\times\Z_{p}^\times\to\Z_{p};
\quad
(x,y) \mapsto xf_0(y) + \frac{(x-x^p)}{p}y^p.
\]
Taking $y=1$, this restricts to the continuous
function
\[
\Z_{p}^\times\to\Z_{p};
\quad
x\mapsto xf_0(1) + \frac{(x-x^p)}{p},
\]
and as $f_0(1)=0$, we have
\[
f_0(x)=\frac{(x-x^p)}{p}.
\]
Hence we have
\begin{equation}\label{eq:Qw}
\dlQ w = f_0(w) = \frac{(w-w^p)}{p}.
\end{equation}

For $n\in\N$, by~\cite{LNM1176}*{theorem~IX.3.3(vii)}
\[
\dlQ(w^{n+1}) =
w^p\dlQ(w^{n}) + w^{np}\dlQ(w) + p\dlQ(w^{n})\dlQ(w)
\]
and an easy induction gives the general formula
\[
\dlQ(w^n) = \frac{(w^n-w^{np})}{p}
\]
for all natural numbers. We also have
\[
0 = \dlQ(1) = \dlQ(w^nw^{-n}) =
w^{np}\dlQ(w^{-n}) + w^{-np}\dlQ(w^{n})
    + p\dlQ(w^{n})\dlQ(w^{-n})
\]
and so
\[
\dlQ(w^{-n}) = \frac{w^{-n}-w^{-np}}{p}.
\]
Therefore for all $n\in\Z$,
\begin{equation}\label{eq:Qw^n}
\dlQ(w^n) = \frac{w^n-w^{np}}{p}.
\end{equation}

The operation $\tdlQ$ is given by
\[
\tdlQ(w^n) = \tdlQ(w)^n,
\]
so for any $g\in\Cont(\Z_p^\times,\Z_p)$
we have
\[
\tdlQ(g(w)) = g(\tdlQ w) = g(w),
\]
and therefore
\[
\dlQ(g(w)) = \frac{1}{p}(g(w)-g(w)^p).
\]
This shows that the sequence of polynomial
functions defined recursively by $\theta_0(w) = w$
and for $n\geq1$,
\[
\theta_n(w) =
 \frac{1}{p}(\theta_{n-1}(w)-\theta_{n-1}(w)^p),
\]
is also given by
\begin{equation}\label{eq:Qtheta_n}
\theta_n(w) = \dlQ(\theta_{n-1}(w)).
\end{equation}
It is known that a (topological) $\Z_p$-basis
for $K^\vee_0K$ can be made using monomials
in the $\theta_n(w)$, see~\cite{padic} for
example. One interpretation of what we have
shown is the following result which seems to
have been long known to Mike Hopkins \emph{et al},
but we do not know a published source; a referee
has drawn our attention to Mark Behrens'
article~\cite{TMF}*{chapter~12, section~6} which
contains a related moduli-theoretic interpretation
of such $\theta$-algebras which may lead to similar
results. We interpret the operation $\dlQ$ as a
realisation of an action of~$\theta$ and therefore
$K^\vee_0K$ becomes a $p$-complete
$\Z_p$-$\theta$-algebra~\cites{AKB:padiclambda,TB&MF}.
\begin{prop}\label{prop:K0K-ThetaAlg}
The $p$-complete $\Z_p$-$\theta$-algebra $K^\vee_0K$
is generated by the element~$w$. Hence $K^\vee_0K$
is a quotient of the free $p$-complete
$\Z_p$-$\theta$-algebra $K^\vee_0(\mathbb{P}S^0)$,
namely
\[
K^\vee_0K \iso
\Z_p[\theta^s(w):s\geq0]\sphat_p\biggl/\biggr.
(\!(\theta^s(w)^p-\theta^s(w)+p\theta^{s+1}(w)
                             : s\geq0)\!).
\]
\end{prop}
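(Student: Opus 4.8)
The plan is to extract everything from the recursion $\theta_n(w) = \dlQ(\theta_{n-1}(w)) = \frac1p\bigl(\theta_{n-1}(w) - \theta_{n-1}(w)^p\bigr)$ of \eqref{eq:Qtheta_n}, together with the fact recalled above from \cite{padic} that the monomials $\prod_{s\geq0}\theta_s(w)^{e_s}$ with $0\leq e_s<p$ (finitely many nonzero) form a topological $\Z_p$-basis of $K^\vee_0K$. First I would establish generation: the smallest closed $\Z_p$-sub-$\theta$-algebra of $K^\vee_0K$ containing $w$ contains each $\theta_n(w) = \dlQ^n(w)$ by \eqref{eq:Qtheta_n}, hence every monomial in the $\theta_n(w)$, hence every $\Z_p$-linear combination of such monomials and every $p$-adic limit of these, and so by the basis theorem of \cite{padic} it is all of $K^\vee_0K$. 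Thus $w$ generates $K^\vee_0K$ as a $p$-complete $\Z_p$-$\theta$-algebra, and the ``Hence'' of the statement is then immediate; the content is the precise presentation. By the universal property of the free $p$-complete $\Z_p$-$\theta$-algebra on one generator, namely $K^\vee_0(\mathbb{P}S^0) = \Z_p[\theta^s(x):s\geq0]\sphat_p$ with $\theta(\theta^s x) = \theta^{s+1}x$, there is a unique morphism $\phi\colon K^\vee_0(\mathbb{P}S^0)\to K^\vee_0K$ of such algebras with $\phi(x) = w$; it satisfies $\phi(\theta^s x) = \theta_s(w)$ and is surjective. (This $\phi$ need not come from any map of spectra, since $w$ is not in the image of $\pi_0$; it exists purely by the universal property.)

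Next, the recursion gives $p\,\theta_{s+1}(w) = \theta_s(w) - \theta_s(w)^p$ in $K^\vee_0K$, so each $(\theta^s x)^p - \theta^s x + p\,\theta^{s+1}x$ lies in $\ker\phi$, and $\phi$ descends to a surjection $\bar\phi$ from the quotient $R$ of $K^\vee_0(\mathbb{P}S^0)$ by the closed ideal these elements generate, which is precisely the algebra on the right of the displayed isomorphism. The substantive point is the injectivity of $\bar\phi$, which I would obtain by reduction mod~$p$. Modulo $p$ the term $p\,\theta^{s+1}x$ vanishes, so $R/pR \iso \F_p[\theta^s x:s\geq0]\big/\bigl((\theta^s x)^p - \theta^s x : s\geq0\bigr)$; this is the filtered colimit of the algebras $\F_p[\theta^0 x,\dots,\theta^{n-1}x]/\bigl((\theta^s x)^p - \theta^s x\bigr)$, each free of rank $p^n$ on reduced monomials, so $R/pR$ is free over $\F_p$ on the reduced monomials $\prod_s(\theta^s x)^{e_s}$, $0\leq e_s<p$. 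On the other side, $K^\vee_0K$ is the $p$-completion of the free $\Z_p$-module on the \cite{padic}-basis, so $K^\vee_0K/p \iso \Cont(\Z_p^\times,\F_p)$ is free over $\F_p$ on the reductions of those basis monomials; hence $\bar\phi\bmod p$ carries a basis bijectively onto a basis and is an isomorphism.

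To conclude, since the defining ideal of $R$ is closed, $R$ is $p$-adically separated; using that $K^\vee_0K$ is $p$-torsion-free and that $\bar\phi\bmod p$ is injective, a short diagram chase with $0\to\ker\bar\phi\to R\xrightarrow{\ \bar\phi\ } K^\vee_0K\to0$ gives $\ker\bar\phi\subseteq pR$ and then $\ker\bar\phi = p\,\ker\bar\phi = p^n\,\ker\bar\phi$ for all $n$, so $\ker\bar\phi\subseteq\bigcap_n p^nR = 0$ and $\bar\phi$ is an isomorphism. The step I expect to be the main obstacle is this injectivity argument, and in particular the one external input it rests on: that the $\Z_p$-basis of $K^\vee_0K$ furnished by \cite{padic} may be taken to be exactly the reduced monomials $\prod_s\theta_s(w)^{e_s}$ with $0\leq e_s<p$. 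If that reference presents the basis in another normal form, one must first reconcile it with this one --- e.g.\ by using the relations $(\theta^s x)^p = \theta^s x - p\,\theta^{s+1}x$ to rewrite an arbitrary monomial as a $p$-adically convergent $\Z_p$-combination of reduced monomials and then checking that no further relations survive --- after which everything else is routine $p$-complete bookkeeping.
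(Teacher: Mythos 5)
Your overall route --- establish generation by density, then pin down the presentation by comparing $\F_p$-bases after reduction mod $p$ and finishing with a separatedness argument --- is a reasonable attempt to supply what the paper only sketches (the paper offers no formal proof: just the computation $\dlQ g(w)=\tfrac1p\bigl(g(w)-g(w)^p\bigr)$ and a citation to \cite{padic} for a monomial basis). However, the external input on which your injectivity step rests is not what \cite{padic} provides, and in the form you state it it is false: the reduced monomials $\prod_s\theta_s(w)^{e_s}$ with $0\leq e_s<p$ and $\theta_0(w)=w$ cannot be a topological $\Z_p$-basis of $K^\vee_0K\iso\Cont(\Z_p^\times,\Z_p)$, because they are already linearly dependent mod $p$: Fermat gives $w^{p-1}\equiv 1\pmod{p}$ (and $w\equiv 1\pmod{2}$ when $p=2$), i.e.\ $(w^{p-1}-1)/p$ is a continuous $\Z_p$-valued function, and both $1$ and $w^{p-1}$ are reduced monomials. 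Any topological basis of a pro-free module must reduce to an $\F_p$-basis, so this family is excluded. The basis actually invoked by the paper (see its sections on $KO$ and on Hopf invariant one elements at $p=2$) consists of reduced monomials in the elements $\Theta_n$ with $\Theta_0=(1-w)/2$, not in $\dlQ^n w$.

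Consequently the crucial step fails: your $\bar\phi\bmod p$ does not carry a basis to a basis, and in fact it is not injective. Concretely, $R/pR\iso\F_p[\theta^sx:s\geq0]/\bigl((\theta^sx)^p-\theta^sx\bigr)$ admits the character sending every $\theta^sx$ to $0$, under which $(\theta^0x)^{p-1}-1$ (resp.\ $\theta^0x-1$ when $p=2$) is nonzero; yet its image $w^{p-1}-1$ (resp.\ $w-1$) is divisible by $p$ in $\Cont(\Z_p^\times,\Z_p)$, hence zero in $K^\vee_0K/p$. No rewriting with the relations $(\theta^sx)^p=\theta^sx-p\,\theta^{s+1}x$ can repair this, because those relations leave $x$ a non-unit (a zero divisor mod $p$) while $w$ is a unit of $K^\vee_0K$: the kernel of the map determined by $x\mapsto w$ strictly contains the closure of the displayed relation ideal. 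To get the displayed presentation one must take the $\theta$-algebra generator mapping to (the odd-primary analogue of) $\Theta_0=(1-w)/2$, whose $\dlQ$-iterates are the $\Theta_n$ and whose reduced monomials really are the \cite{padic} basis --- this is in effect how the paper's surrounding discussion proceeds --- and then prove separately that $\Theta_0$ lies in the closed $\theta$-subalgebra generated by $w$ (for instance at $p=2$ by solving $\theta_1(w)=\Theta_0-2\Theta_0^2$ for $\Theta_0$ by successive approximation), which also rescues the generation claim without appealing to the incorrect basis statement.
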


Here the quotient is taken with respect to the
$p$-adic closure of the ideal generated by the
stated elements, indicated by the use of
$(\!(-)\!)$ rather than $(-)$. This shows that
apart from the $p$-adic completion involved,
$K^\vee_0K$ is a colimit of Artin-Schreier
extensions of the form
\[
\Z_p[X]/(X^p-X+pa)
\]
whose mod~$p$ reduction is the \'etale $\F_p$-algebra
\[
\F_p[X]/(X^p-X) \iso \prod_{0\leq r\leq p-1}\F_p.
\]

Our discussion also shows that the antipode
of $K^\vee_0(K)$, $\chi$ satisfies
\begin{equation}\label{eq:Qchi}
\chi\dlQ = \dlQ\chi.
\end{equation}

Suppose that~$A$ is an \Einfty{} ring spectrum
(or a $K(1)$-local \Einfty{} ring spectrum).
Then we may consider $K^\vee_\bullet(A)$ where
$K^\vee_\bullet(-)$ denotes the $\Z/2$-graded
$p$-complete theory. The power operation~$\dlQ$
intertwines with the coaction as described
in~\cite{Nishida}*{(2.5)}, giving
\begin{equation}\label{eq:psiQ}
\Psi\dlQ x = \dlQ(\Psi x)
\end{equation}
since the antipode $\chi$ satisfies~\eqref{eq:Qchi}
and we have a simpler situation compared to ordinary
mod~$p$ homology where the dual Steenrod algebra
supports two distinct Dyer-Lashof structures related
by the antipode.

%\section{Power operations on $K^\vee_0(KO)$ at $2$}
%\label{sec:Powops-KKO}
We now give a brief description of the modification
required to describe power operations in $K_0^\vee KO$
at the prime~$p=2$. For $KO_*KO_{(2)}$, results
of~\cites{JFA&FWC,AHS} give
\begin{itemize}
\item
for all $m\in\Z$, $KO_mKO_{(2)}\iso KO_m\otimes KO_0KO_{(2)}$;
\item
$KO_0KO_{(2)}$ is a countable free $\Z_{(2)}$-module;
\item
$KO_0KO_{(2)} =
\{f(w)\in\Q[w^2,w^{-2}] : f(\Z^\times_2)\subseteq\Z_2\}$.
\end{itemize}

Passing to $K^\vee_0KO$, recalling that the squaring
homomorphism
\[
\Z^\times_2 = \{\pm1\}\times (1+4\Z_2)
\xrightarrow{\;\;}
1+8\Z_2 \subseteq \Z^\times_2
\]
is surjective, the natural \Einfty{} morphism $KO\to KU$
induces a monomorphism of $2$-complete $\theta$-algebras
$K^\vee_0(KO)\to K^\vee_0(K)$ coinciding with the inclusion
of the continuous functions factoring through~$(-)^2$.

It is clear that $\dlQ$ restricts to $K^\vee_0KO$
and is given by
\[
\dlQ(f) = \frac{(f-f^2)}{2}.
\]
The following elements defined inductively provide
a topological basis for $K^\vee_0KO$:
\[
\Theta_0(w)  = \frac{1-w^2}{8},
\qquad
\Theta_n(w) = \frac{\Theta_{n-1}(w)-\Theta_{n-1}(w)^2}{2}
    \quad (n\geq1).
\]
Then the distinct monomials
$\Theta_0(w)^{\epsilon_0}\Theta_1(w)^{\epsilon_1}
   \cdots\Theta_\ell(w)^{\epsilon_\ell}$
with $\epsilon_j=0,1$ form a topological basis.
Here is the analogue of Proposition~\ref{prop:K0K-ThetaAlg}.
\begin{prop}\label{prop:K0KO-ThetaAlg}
The\/ $2$-complete $\Z_2$-$\theta$-algebra
$K^\vee_0KO$ is a quotient of the free\/
$2$-complete $\Z_2$-$\theta$-algebra generated
by the element\/~$\Theta_0(w)$, i.e.,
\[
K^\vee_0KO \iso
\Z_2[\Theta_s(w):s\geq0]\sphat_2 \,/\,
(\!(\Theta_s(w)^2-\Theta_s(x)+2\Theta_{s+1}(x)
                             : s\geq0)\!).
\]
\end{prop}

\section{The completed $K$-theory of free algebras}
\label{sec:FreeAlgebras}

In this section we will describe $K^\vee_0(\mathbb{P}X)$,
at least for spectra~$X$ for which $K^\vee_0X$ is
suitably restricted. For our purposes, it will suffice
to assume that~$X$ is a CW spectrum with only finitely
many even dimensional cells. It will be useful to
examine how $K^\vee_0(\mathbb{P}X)$ behaves for such
complexes.

Suppose that the $(n-1)$-skeleton $X^{[n-1]}$ of~$X$
is defined. Then the $n$-skeleton $X^{[n]}$ is a
pushout defined by a diagram of the form
\[
\xymatrix{
\bigvee_i S^{n-1}\ar[r]\ar[d]\ar@{}[dr]|{\PO}
 & \bigvee_i D^n\ar[d]  \\
X^{[n-1]}\ar[r] & X^{[n]}
}
\]
for a finite wedge of spheres $\bigvee_i S^{n-1}$.
Similarly there is a pushout diagram of commutative
$S$-algebras
\[
\xymatrix{
\mathbb{P}(\bigvee_i S^{n-1})\ar[r]\ar[d]\ar@{}[dr]|{\PO}
 & \mathbb{P}(\bigvee_i D^n)\ar[d] \\
\mathbb{P}(X^{[n-1]})\ar[r]  & \mathbb{P}(X^{[n]})
}
\]
so $(\mathbb{P}X)^{\langle n\rangle}=\mathbb{P}(X^{[n]})$
is the \Einfty{} $n$-skeleton of the CW commutative
$S$-algebra $\mathbb{P}X$.

If the cells of $X$ are all even dimensional,
we only encounter pushout diagrams of the form
\[
\xymatrix{
\mathbb{P}(\bigvee_i S^{2m-1})\ar[r]\ar[d]\ar@{}[dr]|{\PO}
  & \mathbb{P}(\bigvee_i D^{2m})\ar[d] \\
(\mathbb{P}X)^{\langle 2m-2\rangle}\ar[r]
  & (\mathbb{P}X)^{\langle 2m\rangle}
}
\]
where
\[
(\mathbb{P}X)^{\langle 2m\rangle} \iso
(\mathbb{P}X)^{\langle 2m-2\rangle}
\wedge_{\mathbb{P}(\bigvee_i S^{2m-1})}
\mathbb{P}(\bigvee_i D^{2m}).
\]
To calculate $K^\vee_*((\mathbb{P}X)^{\langle 2m\rangle})$
we may use a K\"unneth spectral sequence of the form
\begin{equation}\label{eq:KSS}
\mathrm{E}^2_{s,t} =
\Tor^{K^\vee_*(\mathbb{P}(\bigvee_i S^{2m-1}))}_{s,t}
(K^\vee_*((\mathbb{P}X)^{\langle 2m-2\rangle}),K_*)
\;\Lra\;
K^\vee_{s+t}((\mathbb{P}X)^{\langle 2m\rangle}),
\end{equation}
where the internal $t$ grading is in $\Z/2$, i.e.,
it is an integer modulo~$2$. This is essentially
described in~\cite{EKMM}, but we will require its
multiplicativity, and also the fact that it inherits
an action of power operations. The latter structure
is constructed in a similar fashion to the mod~$p$
Dyer-Lashof operations in~\cite{HL&IM}.
\begin{prop}\label{prop:KSS-collapsing}
The spectral sequence~\eqref{eq:KSS} collapses
at $\mathrm{E}^2$ to give
\[
K^\vee_{s+t}((\mathbb{P}X)^{\langle 2m\rangle})
=
K^\vee_{s+t}((\mathbb{P}X)^{\langle 2m-2\rangle})
     [\dlQ^sx_{i}:s\geq0,\; i\;]\sphat_p,
\]
where each $x_i$ is in even degree.
\end{prop}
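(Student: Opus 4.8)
The plan is to analyze the Künneth spectral sequence \eqref{eq:KSS} by first computing the relevant input $K^\vee_*(\mathbb{P}(\bigvee_i S^{2m-1}))$ and then showing the $\Tor$ term is concentrated in homological degree zero. For a single odd sphere $S^{2n-1}$, the free commutative $S$-algebra $\mathbb{P}S^{2n-1}$ has homotopy that is an exterior algebra on one generator of odd degree together with the Dyer--Lashof operations applied to it; passing to $p$-complete $K$-theory, $K^\vee_*(\mathbb{P}S^{2n-1})$ is a polynomial algebra (completed) on the classes $\dlQ^s x$, $s\geq 0$, with $x$ the fundamental class of $S^{2n-1}$ (here one uses the $\Z/2$-grading, so $x$ sits in odd degree, but $x^2=0$ forces the exterior behavior of $x$ itself while its power operations are genuinely polynomial generators -- this is the $K$-theoretic analogue of the Dyer--Lashof calculation in \cite{HL&IM}). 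For a finite wedge $\bigvee_i S^{2m-1}$, $\mathbb{P}(\bigvee_i S^{2m-1}) \simeq \bigwedge_i \mathbb{P}(S^{2m-1})$, so $K^\vee_*$ of it is the completed tensor product, a polynomial algebra on $\{\dlQ^s x_i\}$.

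Next I would identify the map $K^\vee_*(\mathbb{P}(\bigvee_i S^{2m-1})) \to K_*$ appearing in \eqref{eq:KSS}: it is induced by $\bigvee_i S^{2m-1} \to *$, i.e., it is the augmentation sending every $x_i$ (and hence every $\dlQ^s x_i$) to $0$. Since the source is a (completed) polynomial ring on the generators $\dlQ^s x_i$ and $K_*$ is the quotient by the ideal they generate, the relevant $\Tor$ is the Koszul homology of a polynomial ring modulo a regular sequence of generators. A regular sequence has $\Tor$ concentrated in the right homological degrees -- in fact, because we are killing the polynomial generators themselves, $\Tor^{K_*[y_j]}_{*}(K_*, -)$ against any module that is free (or flat) over $K_*$ vanishes above degree zero when computed against $K^\vee_*((\mathbb{P}X)^{\langle 2m-2\rangle})$, which by induction is a completed polynomial ring over $K_*$ and in particular flat. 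Thus $\mathrm{E}^2_{s,t}=0$ for $s>0$, the spectral sequence collapses, and $\mathrm{E}^2_{0,*} = K_* \otimes_{K^\vee_*(\mathbb{P}(\bigvee_i S^{2m-1}))} K^\vee_*((\mathbb{P}X)^{\langle 2m-2\rangle})$. Base-changing the completed polynomial algebra along the augmentation, this is exactly $K^\vee_*((\mathbb{P}X)^{\langle 2m-2\rangle})[\dlQ^s x_i : s\geq 0, i]\sphat_p$ (the generators $\dlQ^s x_i$ now becoming free polynomial generators over the base rather than being killed, since in the pushout we attach the cells $D^{2m}$ which bound the $S^{2m-1}$'s).

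The main obstacle will be handling the completions carefully: the Künneth spectral sequence as stated is for ordinary $\Tor$, but the rings involved are $p$-complete, so one must check that $L$-completion (derived $p$-completion) does not introduce higher $\mathrm{lim}^1$ or $L_1$ terms, and that "flat" in the $p$-complete sense suffices for the $\Tor$-vanishing argument. I would address this by working at each finite stage with the discrete $K_0K$-models, establishing the collapse and the polynomial structure integrally (pre-completion) using genuine regularity of the sequence $\{\dlQ^s x_i\}$ in the polynomial ring, and only then applying $(-)\sphat_p$; the freeness of all the modules in sight over $\Z_{(p)}$ (guaranteed by the hypothesis that $X$ has only finitely many even cells, so everything stays torsion-free and degreewise finitely generated) ensures completion is exact on the relevant exact sequences and commutes with the tensor products, delivering the stated formula. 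A secondary technical point is the multiplicative and power-operation structure on the spectral sequence, needed to see that the classes $\dlQ^s x_i$ in $\mathrm{E}^2_{0,*}$ really are detected by the power operations $\dlQ^s$ applied to permanent cycles $x_i$ -- this follows from the construction of the operations on \eqref{eq:KSS} à la \cite{HL&IM}, compatibly with the $\dlQ$-action on $K^\vee_0(-)$ described in Section~\ref{sec:K-thy&powops}.
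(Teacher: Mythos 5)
Your starting identification of the input to the spectral sequence is wrong, and the error propagates through the whole argument. For odd cells, completed $K$-theory does not follow the mod~$p$ Dyer--Lashof pattern you are importing: as the paper recalls from~\cite{TB&MF}, $K^\vee_*(\mathbb{P}(\bigvee_i S^{2m-1}))$ is a $p$-completed \emph{exterior} algebra on odd-degree generators $z_i\in K^\vee_1$, not a completed polynomial algebra on power operations applied to the odd fundamental classes; the polynomial ($\theta$-algebra) answer occurs only for even cells, e.g.\ $K^\vee_*(\mathbb{P}S^0)=\Z_p[\dlQ^s x_0:s\geq0]\sphat_p$. With the correct input, the $\mathrm{E}^2$-term of~\eqref{eq:KSS} is a divided power algebra over $K^\vee_*((\mathbb{P}X)^{\langle 2m-2\rangle})$ on classes of homological degree~$1$, represented in the cobar complex by the $[\dlQ^s z_i]$, so it is emphatically not concentrated on the $0$-line. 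Even within your own setup the vanishing claim fails: the module structure of $K^\vee_*((\mathbb{P}X)^{\langle 2m-2\rangle})$ over $K^\vee_*(\mathbb{P}(\bigvee_i S^{2m-1}))$ is induced by the attaching maps, and since that base is (inductively) concentrated in even degrees the odd-degree generators act by zero; so the relevant $\Tor$ in positive homological degrees is an exterior/divided power algebra rather than zero. Flatness over $K_*$ is beside the point --- what your Koszul argument would need is regularity of the \emph{images} of the generators in the module, which has no reason to hold and fails outright for null attaching maps. Relatedly, your identification of $\mathrm{E}^2_{0,*}$ does not follow from base change: $K_*\otimes_A M$ is a quotient of $M$, never a polynomial extension of it, so the new generators $\dlQ^s x_i$ of the abutment cannot appear on the $0$-line; in the paper's (correct) picture they are detected by the filtration-$1$ divided-power classes.

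Because of this, your proposal never engages with the actual content of the proposition. The collapse itself is cheap: all algebra generators of $\mathrm{E}^2$ have even total degree, so the differentials vanish for parity reasons. The substantive issue is the multiplicative extension problem: $\mathrm{E}^\infty$ is a divided power algebra, with $\gamma_{(r)}([\dlQ^s z_i])^p = pt\,\gamma_{(r+1)}([\dlQ^s z_i])$ for a unit $t$, whereas the abutment is claimed to be a completed polynomial algebra. The paper resolves this using the power operation structure on the spectral sequence (constructed as in~\cite{HL&IM}) together with the relation $\tdlQ Z_i = Z_i^p + p\dlQ Z_i$, whose value is represented on the $1$-line by $[\dlQ z_i]$; this shows the representing classes have nontrivial $p$-th powers and lets one identify the target as $K^\vee_*((\mathbb{P}X)^{\langle 2m-2\rangle})[\dlQ^s x_i:s\geq0,\,i\,]\sphat_p$. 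Your remarks about handling $p$-completion and $L$-completion are reasonable hygiene but address a secondary point; the missing ideas are the correct (exterior) computation of $K^\vee_*$ of the free algebra on odd cells and the divided-power-to-polynomial extension argument via power operations.
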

\begin{proof}
Recall from~\cite{TB&MF} that
\[
K^\vee_*
\Biggl(\mathbb{P}\biggl(\bigvee_i S^{2m-1}\biggr)\Biggr)
               = \Lambda(z_i)\sphat_p,
\]
the $p$-completed exterior algebra on odd degree
generators
$z_i\in K^\vee_1(\mathbb{P}(\bigvee_i S^{2m-1}))$,
each of which originates on a wedge summand.

The $\mathrm{E}^2$-term is a divided power algebra
over $K^\vee_*((\mathbb{P}X)^{\langle 2m-2\rangle})$
on generators of bidegree $(1,1)$, each represented
in the cobar complex by $[\dlQ^sz_i]$. We will write
$\gamma_r([\dlQ^sz_i])$ for the $r$-th divided power
of this element and recall that the particular elements
$\gamma_{(r)}([\dlQ^sz_i])=\gamma_{p^r}([\dlQ^sz_i])$
generate the  algebra subject to relations of the form
\[
\gamma_{(r)}([\dlQ^sz_i])^p =
\binom{p^{r+1}}{p^r,\ldots,p^r}\gamma_{(r+1)}([\dlQ^sz_i]),
\]
where the multinomial coefficient satisfies
\[
\binom{p^{r+1}}{p^r,\ldots,p^r} = p t
\]
for some integer~$t$ not divisble by~$p$. For degree
reasons there can only be trivial differentials, so
the only issue still to be resolved is that of the
multiplicative structure.

We follow a line of argument similar to that of~\cite{HL&IM}.
In the spectral sequence we have
\[
\dlQ[z_i] = [\dlQ z_i],
\]
so it remains to relate this element to a $p$-th
power in the target of the spectral sequence.
By~\cite{LNM1176}*{chapter~IX, theorem~3.3(viii)},
if~$Z_i$ is represented by $[z_i]$, then
$Z_i^p+p\dlQ Z_i$ is represented by $[\dlQ z_i]$,
therefore $Z_i^p$ is represented by
\[
(1-p)[\tdlQ z_i]\equiv [\tdlQ z_i]\pmod{p}.
\]
It follows that each such $Z_i$ has non-trivial
$p$-th power also represented in the $1$-line.
By induction this can be extended to show that
each $\gamma_{(r)}([\dlQ^sz_i])$ represents an
element with non-trivial $p$-th power. Finally,
an easy argument shows that the target is a
completed polynomial algebra as stated.
\end{proof}

It is also useful to generalise this to the case
of a CW spectrum $Y$ with chosen $0$-cell $S^0\to Y$,
where $S^0\xrightarrow{\sim}S$ is the functorial
cofibrant replacement of $S$ in the model category
of $S$-modules. We may then consider the reduced
free commutative $S$-algebras $\widetilde{\mathbb{P}}Y$
which is defined as the homotopy pushout of the
diagram of solid arrows
\[
\xymatrix{
\mathbb{P}S^0\ar[r]\ar[d]\ar@{}[dr]|{\PO} & \mathbb{P}Y\ar@{.>}[d]  \\
S\ar@{.>}[r] & \widetilde{\mathbb{P}}Y
}
\]
where the vertical map is the canonical multiplicative
extension of $S^0\to S$; see~\cite{TAQI} for more on
this construction. As a particular case, we can consider
a map $f\:S^{2m-1}\to S^0$ and form its mapping cone
$C_f=S^0\cup_f D^{2m}$. Then take
$S/\!/f = \widetilde{\mathbb{P}}C_f$ to be a homotopy
pushout for the diagram
\[
\xymatrix{
\mathbb{P}S^0\ar[r]\ar[d]\ar@{}[dr]|{\PO} & \mathbb{P}C_f\ar@{.>}[d] \\
S\ar@{.>}[r] & S/\!/f
}
\]
and there is an associated K\"unneth spectral sequence
\begin{equation}\label{eq:KSS-S//f}
\mathrm{E}^2_{s,t} =
\Tor^{K^\vee_*(\mathbb{P}S^0)}(K_*,K^\vee_*(\mathbb{P}C_f))
\;\Lra\;
K^\vee_{s+t}(S/\!/f).
\end{equation}
It is easily seen that
\[
K^\vee_*(\mathbb{P}S^0) = \Z_p[\dlQ^sx_{0}:s\geq0]\sphat_p
\]
is a subalgebra of
\[
K^\vee_*(\mathbb{P}C_f)
      = \Z_p[\dlQ^sx_{0},\dlQ^sx_{2m}:s\geq0]\sphat_p,
\]
and the spectral sequence has
\[
\mathrm{E}^2_{0,*} =
K_*\otimes_{K^\vee_*(\mathbb{P}S^0)} K^\vee_*(\mathbb{P}C_f)
= \Z_p[\dlQ^sx_{2m}:s\geq0]\sphat_p,
\qquad
\mathrm{E}^2_{r,*} = 0 \quad(r\geq1).
\]
This discussion establishes
\begin{prop}\label{prop:S//f}
We have
\[
K^\vee_*(S/\!/f) = \Z_p[\dlQ^sx_{2m}:s\geq0]\sphat_p.
\]
\end{prop}

Provided we know the coaction for $K^\vee_*(C_f)$,
that for $K^\vee_*(S/\!/f)$ follows formally. In
general we have only the following possible form
of coaction,
\[
\Psi(x_{2m}) = w^m\otimes x_{2m} + c(f)(1-w^m),
\]
where $c(f)$ is a certain kind of rational number.
Then
\[
\Psi(\dlQ^s x_{2m}) = \dlQ^s(\Psi x_{2m})
\]
which involves iterated application of~$\dlQ$.

\section{Some examples based on elements of
Hopf invariant~$1$}\label{sec:MoreHopfInvt1}

Throughout this section we assume that $p=2$.

We will consider the examples $S/\!/\eta$
and $S/\!/\nu$ previously discussed in~\cite{Char}.
Similar considerations apply to other examples
constructed using elements in the image of the
$J$-homomorphism at an arbitrary prime. In order
to study these examples, it is necessary to
determine the $K^\vee_0K$-coaction on
$K^\vee_0(S/\!/f)$. Our goal is to explain
why the following algebraic results holds.
\begin{thm}\label{thm:eta-nu-sigma}
There are continuous epimorphisms of\/
$2$-complete $\Z_2$-$\theta$-algebras
\[
K^\vee_0(S/\!/\eta) \to K^\vee_0K,
\quad
K^\vee_0(S/\!/\nu) \to K^\vee_0K,
\]
where in each case the domain is a free
$\theta$-algebra. Moreover, these are
induced by morphisms of \Einfty{} ring
spectra $S/\!/\eta\to K$ and
$S/\!/\nu\to K$.
\end{thm}
\begin{proof}
We give the ingredients required for the
case of~$\eta$, the other being similar.

We will use the following elements
$\Phi_s=\Phi_s(w)$ ($s\geq0$) of $K^\vee_0K$:
\begin{equation}\label{eq:Phi}
\Phi_0 = \frac{(1-w)}{2},
\qquad
\Phi_n = \frac{(\Phi_{n-1}-\Phi_{n-1}^2)}{2}
           \quad (n\geq1).
\end{equation}
By results of~\cite{padic}, $K^\vee_0K$ has
a topological basis consisting of the monomials
\begin{equation}\label{eq:KK-TopBasis}
\Phi_0^{\epsilon_0}\Phi_1^{\epsilon_1}
                     \cdots\Phi_\ell^{\epsilon_\ell}
\quad (\epsilon_i=0,1).
\end{equation}
If we view these as continuous functions on
$\Z_{2}^\times$, then for a $2$-adic unit
$\alpha$ expressed as
\[
\alpha =
1 - (2a_0 + 2^2a_1 + \cdots + 2^{r+1}a_r + \cdots)
\]
with $a_r = 0,1$, in $\Z_{2}$ we have
\[
\Phi_r(\alpha) \equiv a_r \pmod{2}.
\]
We also know that $\dlQ\Phi_s = \Phi_{s+1}$,
hence $\Phi_s = \dlQ^s\Phi_0$.

In the case where $f=\eta$, we can take the
generator~$x_2$ to have coaction
\begin{equation}\label{eq:x2-coaction}
\Psi(x_2) = \Phi_0\otimes 1 + w\otimes x_2
          = \Phi_0 + wx_2,
\end{equation}
where we suppress the tensor product symbols
when the meaning seems clear without them.
For the coproduct in $K^\vee_0K$ we have
\[
\Psi\Phi_0 = \Phi_0\otimes1 + w\otimes\Phi_0,
\]
and also
\[
\Psi\dlQ x_2 =
w\dlQ x_2 + w\Phi_0 x_2^2 - w\Phi_0 x_2 + \Phi_1.
\]
Without further calculation we see that there is
a homomorphism of topological comodule algebras
\[
\Z_2[x_2]\sphat_2 \to K^\vee_0K;
             \quad x_2\mapsto\Phi_0.
\]
This is induced from a morphism of \Einfty{} ring
spectra $S/\!/\eta\to K$ arising from the fact
that the composition of $\eta\:S^1\to S$ with the
unit $S\to K$ is null homotopic. Therefore there
is an extension to a continuous epimorphism
\[
K^\vee_0(S/\!/\eta) \to K^\vee_0K;
 \quad
 \dlQ^sx_2 \mapsto \Phi_s.
\]
This displays $K^\vee_0K$ as a quotient of the
free $\theta$-algebra $K^\vee_0(S/\!/\eta)$ as
in Proposition~\ref{prop:K0K-ThetaAlg}.
\end{proof}

\begin{thm}\label{thm:S//eta-splitting}
There is a $K(1)$-local equivalence
\[
S/\!/\eta \xrightarrow{\;\sim\;}\prod_{j\geq0} K.
\]
\end{thm}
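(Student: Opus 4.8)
The plan is to produce the equivalence by constructing, for each $j\geq 0$, a map $S/\!/\eta\to K$ of $K(1)$-local spectra, assembling them into a single map $S/\!/\eta\to\prod_{j\geq 0}K$ (which after $K(1)$-localization is the same as the wedge, since we are dealing with a bounded-below situation with finitely generated homotopy in each degree), and then checking that the induced map on $K^\vee_0(-)$ is an isomorphism. Since both source and target are $K(1)$-local and their $K^\vee_0$ groups determine them (the target is a wedge of copies of $K$, and a $K(1)$-local map inducing an iso on $K^\vee_*$ is an equivalence), this reduces the theorem to a purely algebraic statement about $K^\vee_0(K)$-comodules.

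First I would pin down the comodule structure on $K^\vee_0(S/\!/\eta)=\Z_2[\dlQ^s x_2:s\geq0]\sphat_2$, using the coaction formula $\Psi(x_2)=\Theta_0+wx_2$ recorded just before the theorem, together with $\Psi\dlQ^s x_2=\dlQ^s(\Psi x_2)$ and the fact that $\dlQ$ intertwines with the coproduct (equation~\eqref{eq:psiQ}). The key observation is that the epimorphism $K^\vee_0(S/\!/\eta)\to K^\vee_0(K)$, $\dlQ^s x_2\mapsto\Theta_s$, is split as a comodule map: one uses that the $\Theta_s=\dlQ^s\Theta_0$ are part of a topological basis of $K^\vee_0(K)$ consisting of squarefree monomials $\Theta_0^{\epsilon_0}\cdots\Theta_\ell^{\epsilon_\ell}$. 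So $K^\vee_0(S/\!/\eta)$, which is a \emph{polynomial} algebra on the $\dlQ^s x_2$, surjects onto $K^\vee_0(K)$, which is the squarefree quotient; the kernel is generated by the Artin–Schreier type relations $(\dlQ^s x_2)^2-\dlQ^s x_2 + 2\dlQ^{s+1}x_2$ (compare Proposition~\ref{prop:K0K-ThetaAlg}). Writing $K^\vee_0(S/\!/\eta)$ as a free module over $K^\vee_0(K)$ on the monomials in the "extra" polynomial generators — i.e. the non-squarefree monomials — gives a direct sum decomposition of $K^\vee_0(S/\!/\eta)$ as a $K^\vee_0(K)$-comodule into a sum of copies of $K^\vee_0(K)$ itself, indexed by a countable set. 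Realizing the projections onto these summands by actual maps $S/\!/\eta\to K$ is where one invokes that $K$ represents $K^\vee_0(-)$-comodules faithfully in the $K(1)$-local category — each comodule map $K^\vee_0(S/\!/\eta)\to K^\vee_0(K)=\pi_0 L_{K(1)}(K\wedge K)$ is realized, since $S/\!/\eta$ and $K$ are $K(1)$-locally $E_\infty$ and $K^\vee_*$ is the relevant homology theory here.

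The main obstacle I expect is the realization step: passing from the algebraic comodule splitting to a genuine $K(1)$-local splitting of spectra. One needs that $[S/\!/\eta, K]_{K(1)}\cong\Hom_{K^\vee_0 K\text{-comod}}(K^\vee_0(S/\!/\eta), K^\vee_0 K)$, which follows from the fact that $K$ is $K(1)$-locally a "$K$-injective" (its $K^\vee_*$ is a cofree comodule) so that the $K(1)$-local $K$-based Adams spectral sequence computing $[S/\!/\eta,K]_{K(1)}$ collapses — but one must check the $\Ext^{>0}$ terms vanish, which uses that $K^\vee_0(S/\!/\eta)$ is a \emph{free} (hence flat, hence $\Ext$-acyclic against cofree) $\Z_2$-module with the comodule structure above. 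Once each summand map is constructed, assembling into $S/\!/\eta\to\prod_j K\simeq_{K(1)}\bigvee_j K$ and verifying it is a $K^\vee_0$-isomorphism is formal from the decomposition. I would also double-check the indexing set is $\N$ (countably infinite) so that the wedge is genuinely $\bigvee_{j\geq 0}K$ as stated, which amounts to counting the non-squarefree monomials in a polynomial algebra on countably many generators.
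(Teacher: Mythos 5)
First, a point of comparison: the paper does not actually prove this theorem --- immediately after the statement it says ``We will give a proof of this in a future paper'' --- so your proposal can only be judged on its own terms. Your overall strategy (decompose $K^\vee_0(S/\!/\eta)$ as a completed sum of copies of the comodule $K^\vee_0(K)$, then realize the projections using a collapsing $K(1)$-local $K$-based Adams/descent spectral sequence and conclude via a $K^\vee_*$-isomorphism criterion) is a sensible route, but two of your steps have genuine gaps as written.

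The central algebraic step is not justified. ``Writing $K^\vee_0(S/\!/\eta)$ as a free module over $K^\vee_0(K)$ on the non-squarefree monomials'' does not typecheck: the only ring map in sight is the surjection $K^\vee_0(S/\!/\eta)\to K^\vee_0(K)$, which makes the target a module over the source, not the other way round, and no multiplicative section exists. Indeed, a section $s$ would have to respect $2\Theta_1=\Theta_0-\Theta_0^2$, so mod~$2$ the element $s(\Theta_0)$ would satisfy $\bar{y}(1+\bar{y})=0$ in the integral domain $\F_2[\dlQ^sx_2:s\geq0]$, forcing $\bar{y}=0$ or $1$; but $\Theta_0$ reduces to a nonconstant function in $\Cont(\Z_2^\times,\F_2)$, a contradiction. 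What you actually need is a splitting of the surjection in the category of $L$-complete comodules (equivalently, of continuous $\Z_2^\times$-modules, via the appendix), and more globally a decomposition of $K^\vee_0(S/\!/\eta)$ into a \emph{completed} direct sum of copies of the coinduced module $K^\vee_0(K)\iso\Cont(\Z_2^\times,\Z_2)$. Pointing to the topological basis of squarefree $\Theta$-monomials is not an argument for this: the coaction is not diagonal on monomials (compare $\Psi\dlQ x_2=w\dlQ x_2+w\Theta_0x_2^2-w\Theta_0x_2+\Theta_1$), so one needs, e.g., injectivity/acyclicity of coinduced modules or a filtration and vanishing of $\mathcal{H}^1_{\mathrm{c}}(\Z_2^\times;-)$ to produce the splitting. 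Second, the wedge--product identification is false as justified: $K$ is $2$-periodic, hence not bounded below, and $K(1)$-locally a countably infinite wedge of copies of $K$ is \emph{not} the product --- $\pi_0$ of $L_{K(1)}\bigl(\bigvee_jK\bigr)$ is the completed direct sum of copies of $\Z_2$ (sequences tending to zero $2$-adically), strictly smaller than $\prod_j\Z_2$. So assembling your maps into $\prod_jK$ does not yield the stated equivalence with $\bigvee_jK$; you must either construct the map into $L_{K(1)}\bigl(\bigvee_jK\bigr)$ directly and check it induces an isomorphism onto the completed sum, or prove separately that your map factors through the wedge, and in either case address convergence ($\lim^1$) issues in realizing infinitely many components simultaneously. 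The realization step itself ($[S/\!/\eta,K]\iso\Hom$ of comodules via a collapsing spectral sequence, using that $K^\vee_0(S/\!/\eta)$ is pro-free and evenly graded) is plausible but also needs the $L$-complete/$\Ext$-vanishing details spelled out.
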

\begin{proof}[Outline of Proof]
%The above epimorphism gives rise to an isomorphism
%of algebras
%\[
%\xymatrix{
%K^\vee_0(S/\!/\eta)\ar[d]_{\Psi}\ar@/^30pt/[ddr]^{\iso} & \\
%K^\vee_0(K)\hotimes K^\vee_0(S/\!/\eta)\ar@/_30pt/[rd]_(.21){\mathrm{quo}} & \\
%& K^\vee_0(K)\hotimes K^\vee_0(S/\!/\eta)/
%  (\!((\dlQ^sx_2)^2 - \dlQ^sx_2+2\dlQ^{s+1}x_2:s\geq0)\!)
%}
%\]
%where the codomain is extended. A standard argument
%shows that the Adams-Novikov spectral sequence
%converging to $\pi_*(L_{K(1)}S/\!/\eta)$ collapses
%and yields
%\begin{align*}
%\pi_*(L_{K(1)}S/\!/\eta)
% & \iso
%K^\vee_0(K)\hotimes K^\vee_0(S/\!/\eta)/
%  (\!((\dlQ^sx_2)^2 - \dlQ^sx_2+2\dlQ^{s+1}x_2:s\geq0)\!) \\
% & \iso \Z_2[(\dlQ^sx_2)^2 : s\geq0]\sphat_2.
%\end{align*}
%Since this is pro-free, there is countable
%wedge $\bigvee_\N K$ and a map
%\[
%\bigvee K \to L_{K(1)}S/\!/\eta
%\]
%inducing an equivalence
%\[
%L_{K(1)}(\bigvee_\N K)
%       \xrightarrow{\;\sim\;}L_{K(1)}S/\!/\eta.
%\qedhere
%\]
We will use the homology theory $K(1)_*(-)$,
i.e., mod~$2$ $K$-theory. For the spectra we
are considering, odd degree groups are trivial
so we can consider the ungraded $\F_2$-vector
spaces obtained from $K(1)_0(-)$. This functor
takes values in the category of
$K(1)_0(K)$-comodules, where
$K(1)_0(K)\subseteq K(1)_0(K(1))$ is the
subHopf algebra called the Morava stabiliser
(Hopf) algebra and often denoted (rather
confusingly) $K(1)_0K(1)$ in the literature.

Using the basis of~\eqref{eq:KK-TopBasis},
we see that the group-like element
$w=1-2\Theta_0\in K^\vee_0(K)$ reduces
mod~$2$ to~$1$ and this is the only
group-like element of $K(1)_0(K)$.
The reductions mod~$2$ of this basis
give a basis for $K(1)_0(K)$ and the
increasing coradical filtration
$F_kK(1)_0(K)$ ($k\geq0$) defined by
Laures \& Schuster~\cite{GL&BS:K2localMString}*{section~2}
has
\[
F_kK(1)_0(K) = \F_2\{1,\Phi_0,\ldots,\Phi_{k-1}\}.
\]

The epimorphism $K^\vee_0(S/\!/\eta)\to K^\vee_0(K)$
gives rise to a commutative diagram of
$K(1)_0(K)$-comodule algebras of the
following shape.
\[
\xymatrix{
& K(1)_0(S/\!/\eta)\ar[d]_{\Psi}\ar@{->>}[r]\ar@{->>}@/_15pt/[ddl]
& K(1)_0(K)\ar[d]^{\Psi} \\
& K(1)_0(K)\otimes K(1)_0(S/\!/\eta)\ar@{->>}[r]\ar@{->>}[d]_{\mathrm{counit}\otimes\Id}
& K(1)_0(K)\otimes K(1)_0(K)\ar@{->>}@/^15pt/[dl]^{\mathrm{counit}\otimes\Id} \\
K(1)_0(K)\ar@{<->}[r]^(.45)\iso & \F_2\otimes K(1)_0(K) &
}
\]
The coaction for $K(1)_0(S/\!/\eta)=\F_2[\dlQ^s:s\geq0]$
is computable recursively, for example~\eqref{eq:x2-coaction}
gives
\[
\Psi(x_2) = \Phi_0\otimes 1 + 1\otimes x_2.
\]
and
\[
\Psi(\dlQ x_2) =
\Phi_1\otimes1 + \Phi_0\otimes(x_2+x_2^2) + 1\otimes\dlQ x_2.
\]

Now we can use an appropriate version of the
classic Milnor-Moore Theorem of~\cite{M&M:HopfAlg},
see for example Laures \& Schuster~\cite{GL&BS:K2localMString}*{theorem~2.8},
to deduce that
\[
K(1)_0(S/\!/\eta) \iso
K(1)_0(K)\otimes \Prim_{K(1)_0(K)}K(1)_0(S/\!/\eta),
\]
where $\Prim_{K(1)_0(K)}K(1)_0(S/\!/\eta)\subseteq K(1)_0(S/\!/\eta)$
is the subalgebra of primitives. To use this,
we need to determine filtration
\[
F_kK(1)_0(S/\!/\eta) =
\Psi^{-1}(F_k K(1)_0(K)\otimes K(1)_0(S/\!/\eta))
\quad(k\geq0)
\]
associated with the coradical filtration. By
induction we find that
\[
F_kK(1)_0(S/\!/\eta) = \F_2[x_2,\ldots,\dlQ^{k-1}x_2].
\]
We need to check the condition that the surjection
$K(1)_0(S/\!/\eta)\to K(1)_0(K)$ is a $\star$-isomorphism
as in~\cite{GL&BS:K2localMString}*{definition~2.6}
(note that as we are working with \emph{left}
comodules we need to consider graded \emph{right}
primitives). Using an induction on~$k$, we find
that the $k$-graded right primitive subspace is
$F_kK(1)_0(S/\!/\eta)$ and this maps onto
$F_kK(1)_0(K)$ which is the $k$-graded right
primitive subspace of~$K(1)_0(K)$.

Dualising and taking care with the inherent linearly
compact topologies and completed tensor products
involved, we obtain an isomorphism of left
topological $K(1)^0(K)$-modules
\[
K(1)^0(S/\!/\eta) \iso
K(1)^0(K)\hotimes(\Prim_{K(1)_0(K)}K(1)_0(S/\!/\eta))^\dagger,
\]
where $V^\dagger$ denotes the set of functionals
supported on finite dimensional subspaces of
the vector space~$V$. Choosing a topological
basis $\{b_\alpha:\alpha\in A\}$ for
$(\Prim_{K(1)_0(K)}K(1)_0(S/\!/\eta))^\dagger$,
we may lift each $b_\alpha$ to an element
$\tilde{b_\alpha}\in K^0(S/\!/\eta)$ since
$K(1)_1(S/\!/\eta)=0$. This gives a map
$S/\!/\eta\to \prod_{\alpha\in A} K$ which
induces a $K(1)$-isomorphism, hence it is
a $K(1)$-local equivalence. In fact~$A$
can be taken to be countable, so we might
as well index on the natural numbers.
%
%In this case
%we have
%\[
%\Prim_{K(1)_0(K)}K(1)_0(S/\!/\eta) \iso
%K(1)_0(S/\!/\eta)/\ker[K(1)_0(S/\!/\eta)\to K(1)_0(K)]
%\]
%and
%\[
%\ker[K(1)_0(S/\!/\eta)\to K(1)_0(K)]
%= ((\dlQ^sx_2)^2 - \dlQ^sx_2 : s\geq0)
%\]
%
%A standard argument shows that the Adams-Novikov
%spectral sequence
%converging to $\pi_*(L_{K(1)}S/\!/\eta)$ collapses
%and yields
%\begin{align*}
%\pi_*(L_{K(1)}S/\!/\eta)
% & \iso
%K^\vee_0(K)\hotimes K^\vee_0(S/\!/\eta)/
%  (\!((\dlQ^sx_2)^2 - \dlQ^sx_2+2\dlQ^{s+1}x_2:s\geq0)\!) \\
% & \iso \Z_2[(\dlQ^sx_2)^2 : s\geq0]\sphat_2.
%\end{align*}
%Since this is pro-free, there is countable
%wedge $\bigvee_\N K$ and a map
%\[
%\bigvee K \to L_{K(1)}S/\!/\eta
%\]
%inducing an equivalence
%\[
%L_{K(1)}(\bigvee_\N K)
%       \xrightarrow{\;\sim\;}L_{K(1)}S/\!/\eta.
%\qedhere
%\]
\end{proof}

Notice that there is an \Einfty{} morphism $S/\!/\eta\to kU$
which induces a surjection on $\pi_*(-)$ but not on
$H_*(-;\F_2)$. Hence $kU$ cannot be a retract of $S/\!/\eta$
$2$-locally or after $2$-completion. However, multiplication
by the Bott map induces a cofibre sequence
\[
\Sigma^2 kU\to kU\to H\Z
\]
where $KU\wedge H\Z$ is rational. Therefore $\Sigma^2 kU\to kU$
is a $K(1)$-local equivalence, so it induces an isomorphism
on $K^\vee(-)$.

Notice that
\[
w^2 = (1 - 2\Phi_0)^2
    = 1 - 4(\Phi_0-\Phi_0^2)
    = 1 - 8\Phi_1,
\]
so
\[
1-w^2 = 8\Phi_1.
\]
Similarly,
\[
w^4 = 1 - 16(\Phi_1-\Phi_1^2) + 48\Phi_1^2,
\]
and therefore
\[
1 - w^4 = 16(\Phi_1-\Phi_1^2) - 48\Phi_1^2
        = 32\Phi_2 - 48\Phi_1^2.
\]
Such identities allow us to describe the groups
\[
\Ext^{1,2n}_{K_*K}(K_*,K_*) =
\Pr K_{2n}K/(\eta_{\mathrm{L}}-\eta_{\mathrm{R}})K_{2n}
\]
that detect the $2$-primary part of image of the
$J$-homomorphism through the $e$-invariant. Here~$\Pr$
denotes the subgroup of primitive elements which satisfy
\[
\Psi(x) = 1\otimes x + x\otimes1,
\]
and $\eta_{\mathrm{L}},\eta_{\mathrm{R}}$ denote the
left and right units respectively. When $n=1,2,4$,
these groups are cyclic with the following orders and
generators:
\begin{itemize}
\item
{\ }$2$, generator represented by $u\Phi_0$;
\item
{\ }$8$, generator represented by $u^2\Phi_1$;
\item
{\ }$16$, generator represented by $u^4(2\Phi_2-3\Phi_1^2)$.
\end{itemize}
Here we write $u\in K_2$ for the Bott generator. In the
first and last cases, a generator of $(\im J)_{2n-1}$
maps to the generator, but in the middle case only the
multiples of $2u^2\Phi_1$ are hit; for details
see~\cites{MRW,DCR:Novices}.

For $S/\!/\nu$ and $S/\!/\sigma$,
\[
K^\vee_0(S/\!/\nu) = \Z_2[\dlQ^sx_4:s\geq0]\sphat_2,
\quad
K^\vee_0(S/\!/\sigma) = \Z_2[\dlQ^sx_8:s\geq0]\sphat_2,
\]
we have the coactions
\[
\Psi x_4 = w^2\otimes x_4 + 2\Phi_1,
\quad
\Psi x_8 = w^4\otimes x_8 + 2\Phi_2 - 3\Phi_1^2.
\]

Finally, we note that there is an \Einfty{} morphism
$S/\!/\nu\to kO$ inducing an epimorphism on $\pi_*(-)$
which is not an epimorphism on $H_*(-;\F_2)$. The
composition $S/\!/\nu\to kO\to KO$ induces a $K(1)$-local
splitting whose proof is similar to that of
Theorem~\ref{thm:S//eta-splitting}.
\begin{thm}\label{thm:S//nu-splitting}
There is a $K(1)$-local equivalence
\[
S/\!/\nu \xrightarrow{\;\sim\;}
             \prod_{j\geq0}\Sigma^{4\rho(j)}KO,
\]
for some numerical function $\rho$ taking
values in~$\{0,1\}$.
\end{thm}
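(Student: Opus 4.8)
The plan is to argue as in the case of $S/\!/\eta$, working now with $2$-complete real $K$-theory $KO\sphat_2=L_{K(1)}KO$ in place of $K$. \emph{The comparison map.} Since $\pi_3 kO=0$, the composite $S^3\xrightarrow{\nu}S^0\to kO$ of $\nu$ with the unit is null, so the inclusion $S^0\to C_\nu$ extends over $C_\nu=S^0\cup_\nu D^4$; by the universal property of the reduced free commutative $S$-algebra this yields an $E_\infty$ map $S/\!/\nu=\widetilde{\mathbb{P}}C_\nu\to kO$. Composing with $kO\to KO$ and applying $L_{K(1)}$ (the cofibre of $kO\to KO$ is $K(1)$-acyclic, so $L_{K(1)}kO\homeq KO\sphat_2$) produces a map of $K(1)$-local $E_\infty$ ring spectra
\[
\phi\colon L_{K(1)}(S/\!/\nu)\lra KO\sphat_2 .
\]

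\emph{The comodule.} From Section~\ref{sec:FreeAlgebras}, $K^\vee_0(S/\!/\nu)=\Z_2[\dlQ^s x_4:s\geq0]\sphat_2$, with coaction determined by $\Psi x_4=w^2\otimes x_4+2\Theta_1$ together with $\Psi\dlQ^s x_4=\dlQ^s(\Psi x_4)$. Since $1-w^2=8\Theta_1$ lies in $KO^\vee_0(K)\subseteq K^\vee_0(K)$ and $KO^\vee_0(K)$ is closed under $\dlQ$, this comodule descends to $KO^\vee_0(KO)$, and by the method of Proposition~\ref{prop:KSS-collapsing} (using the structure of $KO_*(KO)$ recalled in Section~\ref{sec:Powops-KKO}) one identifies $KO^\vee_*(S/\!/\nu)$ as the completed free $KO_*$-module on the monomials $\prod_{s\geq0}(\dlQ^s x_4)^{e_s}$. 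Such a monomial lies in degree $0$ or $4$ modulo $8$ according as $e_0$ is even or odd, because $x_4$ sits on the $4$-cell of $C_\nu$ whereas each $\dlQ^s x_4$ with $s\geq1$ originates in a dimension divisible by~$8$; this parity is the source of the shifts $\Sigma^{4\rho(j)}$ in the statement (which, for $K$ itself, would collapse since $K$ is $2$-periodic).

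\emph{Cofreeness.} The crux is to show that the $KO^\vee_0(KO)$-comodule $KO^\vee_*(S/\!/\nu)$ is cofree, i.e.\ isomorphic to a completed direct sum $\bigoplus_{j\geq0}\Sigma^{4\rho(j)}KO^\vee_*(KO)$, with $\rho$ the numerical function recording the $e_0$-parities. Under $\phi$ one computes $\dlQ^s x_4\mapsto 2\Theta_{s+1}$ modulo decomposables, using $\dlQ\Theta_s=\Theta_{s+1}$ and $1-w^2=8\Theta_1$; this gives a comodule surjection onto $KO^\vee_*(KO)$, and, filtering the polynomial algebra by total degree $\sum_s e_s$ and using the multiplication to generate the remaining classes, one splits off a cofree summand at each stage. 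The delicate part is controlling the $2$-adic denominators produced by the iterated operation $\dlQ^s(w^2\otimes x_4+2\Theta_1)$ and checking, via the relations among the $\Theta_n$ (for instance $w^2=1-8\Theta_1$ and $1-w^4=32\Theta_2-48\Theta_1^2$), that the divided powers occurring in $KO^\vee_*(KO)$ are exactly those forced by the comodule structure, so that the higher $\Ext$ groups over $KO^\vee_0(KO)$ vanish.

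\emph{Realisation.} Granting cofreeness, each summand $\Sigma^{4\rho(j)}KO^\vee_*(KO)$ is realised, by $K$-injectivity of $KO\sphat_2$, by a map between $L_{K(1)}(S/\!/\nu)$ and $\Sigma^{4\rho(j)}KO\sphat_2$ built from $\phi$ and the $E_\infty$-multiplication on $S/\!/\nu$; assembling these over $j$ gives a map $\bigvee_{j\geq0}\Sigma^{4\rho(j)}KO\to L_{K(1)}(S/\!/\nu)$ which is an isomorphism on $KO^\vee_*(-)$. Both sides are $K(1)$-local with cofree $KO^\vee_*$, so their $KO\sphat_2$-based $K(1)$-local Adams spectral sequences collapse onto filtration zero and a $KO^\vee_*$-isomorphism between them is an equivalence, giving the asserted splitting. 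I expect cofreeness to be the main obstacle: unlike the $\eta$-case, where $K^\vee_0(K)$ is transparently $\Cont(\Z_2^\times,\Z_2)$, here the computation is carried out over the less transparent ring $KO^\vee_0(KO)$, and one must also invoke the bound on the cohomological dimension of its comodule category at $p=2$ in order to be sure that the final collapse is governed purely by cofreeness.
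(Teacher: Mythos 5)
There is nothing in the paper to measure your argument against: the author states Theorem~\ref{thm:S//nu-splitting} (like Theorem~\ref{thm:S//eta-splitting}) without proof, explicitly deferring the proof to a future paper, and only records the ingredients you also use, namely the $E_\infty$ map $S/\!/\nu\to kO\to KO$, the computation $K^\vee_0(S/\!/\nu)=\Z_2[\dlQ^sx_4:s\geq0]\sphat_2$ with $\Psi x_4=w^2\otimes x_4+2\Theta_1$, and the description of $KO^\vee_0(KO)$ from Section~\ref{sec:Powops-KKO}. So your proposal can only be judged on its own terms, and as it stands it is an outline rather than a proof.

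The genuine gaps are concentrated exactly where you flag them, but flagging is not filling. First, the passage from the complex computation to ``$KO^\vee_*(S/\!/\nu)$ is a completed free $KO_*$-module on the monomials in the $\dlQ^sx_4$'' is not a routine transcription of Proposition~\ref{prop:KSS-collapsing}: $KO_*$ has $2$-torsion and $KO$ is not complex orientable, so the K\"unneth spectral sequence argument and the identification of the power-operation structure both need to be redone in the real case, not cited by analogy. Second, the crux --- cofreeness of $KO^\vee_*(S/\!/\nu)$ as an $(L$-complete$)$ $KO^\vee_0(KO)$-comodule, equivalently vanishing of the higher derived cotensor (continuous cohomology of $\Z_2^\times$, which has infinite cohomological dimension at $p=2$) --- is asserted via an unspecified filtration and ``splitting off a cofree summand at each stage''; no mechanism is given for why the comodule is induced rather than merely mapping onto $KO^\vee_*(KO)$, and the claim $\dlQ^sx_4\mapsto 2\Theta_{s+1}$ modulo decomposables is itself a computation you have not performed (note $\dlQ(2\Theta_1)=2\Theta_2+\frac{2-2^2}{2}\Theta_1^2$, so decomposables enter already at the first step and must be controlled). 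Third, the degree bookkeeping producing the shifts $\Sigma^{4\rho(j)}$ rests on the unproven assertion that $\dlQ^sx_4$ for $s\geq1$ lives in degree $\equiv0\pmod 8$; since $\dlQ$ is defined on degree-zero classes and moved around using the $2$-periodicity of $K$, which $KO$ lacks, this needs an argument. Finally, the realisation step is internally inconsistent: you construct maps out of $L_{K(1)}(S/\!/\nu)$ into the various $\Sigma^{4\rho(j)}KO\sphat_2$, but then claim a map from the wedge into $L_{K(1)}(S/\!/\nu)$; maps to each factor assemble into a map to the product, not out of the ($K(1)$-localised) wedge, so the equivalence asserted in the theorem is never actually constructed. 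The overall strategy (comodule cofreeness plus a descent/Adams collapse, as in the Appendix and Theorem~\ref{thm:DescentSS}) is a plausible route, but each of these steps is a real obstacle rather than a detail.
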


\begin{rem}\label{rem:sigma}
The case of $S/\!/\sigma$ should also be
amenable to a similar analysis, however
we have not found convenient way to
formalise an argument for this case.
\end{rem}

\begin{bibdiv}
\begin{biblist}

\bib{JFA:BlueBook}{book}{
   author={Adams, J. F.},
   title={Stable Homotopy and Generalised Homology},
   series={Chicago Lectures in Mathematics},
   note={Reprint of the 1974 original},
   publisher={University of Chicago Press},
   date={1995},
}

\bib{JFA&FWC}{article}{
   author={Adams, J.~F.},
   author={Clarke, F.~W.},
   title={Stable operations on complex $K$-theory},
   journal={Ill. J. Math.},
   volume={21},
   date={1977},
   pages={826\ndash829},
}

\bib{AHS}{article}{
   author={Adams, J. F.},
   author={Harris, A. S.},
   author={Switzer, R. M.},
   title={Hopf algebras of cooperations for
   real and complex $K$-theory},
   journal={Proc. London Math. Soc. (3)},
   volume={23},
   date={1971},
   pages={385\ndash408},
}
		
\bib{padic}{article}{
   author={Baker, A.},
   title={$p$-adic continuous functions
   on rings of integers},
   journal={J. Lond. Math. Soc.},
   volume={33},
   date={1986},
   pages={414\ndash20},
}

\bib{In-localEn}{article}{
   author={Baker, A.},
   title={$I_n$-local Johnson-Wilson spectra
   and their Hopf algebroids},
   journal={Documenta Math.},
   volume={5},
   date={2000},
   pages={351\ndash64},
}

\bib{L-complete}{article}{
   author={Baker, A.},
   title={$L$-complete Hopf algebroids and
   their comodules},
   journal={Contemp. Math.},
   volume={504},
   date={2009},
   pages={1\ndash22},
}

\bib{TAQI}{article}{
    author={Baker, A.},
    title={Calculating with topological Andr\'e-Quillen
          theory, I: Homotopical properties of universal
          derivations and free commutative $S$-algebras},
    eprint={arXiv:1208.1868 (v5+)},
      date={2012},
}

\bib{Nishida}{article}{
   author={Baker, A.},
   title={Power operations and coactions
   in highly commutative homology theories},
   journal={Publ. Res. Inst. Math. Sci.},
   volume={51},
   pages={237\ndash272},
   date={2015},
}

\bib{Char}{article}{
    author={Baker, A.},
    title={Characteristics for $E_\infty$
    ring spectra},
    journal={Contemp. Math.},
    pages={1\ndash17},
    volume={708},
    date={2018},
}

\bib{HGamma}{article}{
   author={Baker, A.},
   author={Richter, B.},
   title={On the $\Gamma$-cohomology of rings
   of numerical polynomials and $E_\infty$
   structures on $K$-theory},
   journal={Commentarii Math. Helv.},
   volume={80},
   date={2005},
   pages={691\ndash723},
}

\bib{TB&MF}{article}{
   author={Barthel, T.},
   author={Frankland, M.},
   title={Completed power operations for
   Morava $E$-theory},
   journal={Algebr. Geom. Topol.},
   volume={15},
   date={2015},
   pages={2065\ndash2131},
}

\bib{AKB:padiclambda}{article}{
   author={Bousfield, A.~K.},
   title={On $p$-adic $\lambda$-rings
   and the $K$-theory of $H$-spaces},
   journal={Math. Z.},
   volume={223},
   date={1996},
   number={3},
   pages={483\ndash519},
}

\bib{LNM1176}{book}{
   author={Bruner, R. R.},
   author={May, J. P.},
   author={McClure, J. E.},
   author={Steinberger, M.},
   title={$H_\infty $ ring spectra and
   their applications},
   series={Lect. Notes in Math.},
   volume={1176},
   date={1986},
}

\bib{FWC:UltrametAnal}{article}{
   author={Clarke, F. W.},
   title={$p$-adic analysis and operations
   in $K$-theory},
   journal={Groupe de travail d'analyse
   ultram\'etrique},
   volume={14},
   number={15},
   date={1986\ndash7},
   eprint={http://www.numdam.org/item?id=GAU_1986-1987__14__A7_0},
}

%%\bib{DD&TL:Descent}{article}{
%%   author={Davis, D. G.},
%%   author={Lawson, T.},
%%   title={A descent spectral sequence for
%%   arbitrary $K(n)$-local spectra with
%%   explicit $E_2$-term},
%%   journal={Glasgow Math. J.},
%%   volume={56},
%%   date={2014},
%%   number={2},
%%   pages={369\ndash380},
%%}
		
\bib{TMF}{book}{
   author={Douglas, D. L.},
   author={Francis, J.},
   author={Henriques, A. G.},
   author={Hill, M. A.},
   title={Topological Modular Forms},
   series={Math. Surv. and Mono.},
   volume={201},
   date={2014},
}

\bib{EKMM}{book}{
    author={Elmendorf, A. D.},
    author={\Kriz, I.},
    author={Mandell, M. A.},
    author={May, J. P.},
    title={Rings, modules, and algebras in
    stable homotopy theory},
    journal={Math. Surv. and Monographs},
    volume={47},
    note={With an appendix by M. Cole},
    date={1997},
}

\bib{JPG&JPM:I-adic}{article}{
   author={Greenlees, J. P. C.},
   author={May, J. P.},
   title={Derived functors of $I$-adic
   completion and local homology},
   journal={J. Algebra},
   volume={149},
   date={1992},
   pages={438\ndash453},
}

\bib{MJH:K(1)localEinfty}{article}{
    author={Hopkins, M. J.},
    title={$K(1)$-local $E_\infty$-ring spectra},
    journal={Math. Surv. and Mono.},
    pages={287\ndash302},
    volume={201},
    date={2014},
}

\bib{Hovey:L-colimits}{article}{
   author={Hovey, M.},
   title={Morava $E$-theory of filtered colimits},
   journal={Trans. Amer. Math. Soc.},
   volume={360},
   date={2008},
   number={1},
   pages={369\ndash382},
}

\bib{MH:SSMoravaEthy}{article}{
    author={Hovey, M.},
    title={Some spectral sequences in Morava $E$-theory},
    eprint={http://mhovey.web.wesleyan.edu},
}

\bib{MAMS666}{article}{
   author={Hovey, M.},
   author={Strickland, N. P.},
   title={Morava $K$-theories and localisation},
   journal={Mem. Amer. Math. Soc.},
   volume={139},
   date={1999},
   number={666},
}

\bib{MJ:Glasgow}{article}{
   author={Joachim, M.},
   title={Higher coherences for equivariant $K$-theory},
   journal={Lond. Math. Soc. Lect. Note Ser.},
   volume={315},
   date={2004},
   pages={87\ndash114},
}

\bib{SOK:DLops}{article}{
   author={Kochman, S. O.},
   title={Homology of the classical groups
   over the Dyer-Lashof algbera},
   journal={Trans. Amer. Math. Soc.},
   volume={185},
   date={1973},
   pages={83\ndash136},
}

\bib{GL&BS:K2localMString}{article}{
   author={Laures, G.},
   author={Schuster, B.},
   title={Towards a splitting of the $K(2)$-local
   string bordism spectrum},
   journal={Proc. Amer. Math. Soc.},
   volume={147},
   date={2019},
   pages={399\ndash410},
}

\bib{HL&IM}{article}{
   author={Ligaard, H.},
   author={Madsen, I.},
   title={Homology operations in the
   Eilenberg-Moore spectral sequence},
   journal={Math. Z.},
   volume={143},
   date={1975},
   pages={45\ndash54},
}

\bib{MRW}{article}{
   author={Miller, H. R.},
   author={Ravenel, D. C.},
   author={Wilson, W. S.},
   title={Periodic phenomena in the Adams-Novikov
   spectral sequence},
   journal={Ann. of Math. (2)},
   volume={106},
   date={1977},
   number={3},
   pages={469\ndash516},
}

\bib{M&M:HopfAlg}{article}{
   author={Milnor, J. W.},
   author={Moore, J. C.},
   title={On the structure of Hopf
   algebras},
   journal={Ann. of Math.},
   volume={81},
   date={1965},
   pages={211\ndash264},
}

%%\bib{GQ:ContsHtpyFixPts}{article}{
%%   author={Quick, G.},
%%   title={Continuous homotopy fixed points for
%%   Lubin-Tate spectra},
%%   journal={Homology Homotopy Appl.},
%%   volume={15},
%%   date={2013},
%%   number={1},
%%   pages={191\ndash222},
%%}
		
\bib{DCR:Novices}{article}{
   author={Ravenel, D. C.},
   title={A novice's guide to the Adams-Novikov
   spectral sequence},
   journal={Lect. Notes in Math.},
   volume={658},
   date={1978},
   pages={404\ndash475},		
}

%\bib{CR:PowOps-ht2pr2}{article}{
%    author={Rezk, C.},
%    title={Power operations for Morava $E$-theory
%    of height $2$ at the prime $2$},
%    eprint={arXiv:0812.1320},
%    date={2008},
%}

\bib{CR:CongCond}{article}{
   author={Rezk, C.},
   title={The congruence criterion for
   power operations in Morava $E$-theory},
   journal={Homology Homotopy Appl.},
   volume={11},
   date={2009},
   pages={327\ndash379},
}

\bib{CR:ModIsogCmplxes}{article}{
   author={Rezk, C.},
   title={Modular isogeny complexes},
   journal={Algebr. Geom. Topol.},
   volume={12},
   date={2012},
   pages={1373\ndash1403},
}

\bib{CR:PowOps-Koszul}{article}{
    author={Rezk, C.},
    title={Rings of power operations
    for Morava $E$-theories are Koszul},
    eprint={arXiv:1204.4831},
    date={2017},
}

\end{biblist}
\end{bibdiv}

\end{document}